\documentclass[10pt]{amsart}

\usepackage{amsmath,amssymb}
\usepackage[hidelinks]{hyperref}

\newtheorem{theorem}{Theorem}
\newtheorem{corollary}[theorem]{Corollary}
\newtheorem{lemma}[theorem]{Lemma}
\theoremstyle{definition}
\newtheorem{notation}[theorem]{Notation}

\newcommand{\cO}{\mathcal O}
\newcommand{\fm}{\mathfrak m_{\overline K}}
\newcommand{\Kbar}{\overline K}
\newcommand{\ksep}{k^{\mathrm{sep}}}

\title[Integral $2$-adic Tate modules]
{On the integral $2$-adic Tate module of elliptic curves}
\author{Edwina Aylward}
\address{University College London, London WC1H 0AY, UK}
\email{edwina.aylward.23@ucl.ac.uk}
\subjclass[2020]{Primary 11G07; Secondary 11F80.}

\begin{document}

\begin{abstract}
We show that the integral $2$-adic Tate module of an elliptic curve
over a complete discretely valued field of odd residue characteristic
is determined by its $2$-torsion representation, together with the
square class of $c$ and local information about the pairwise
differences of the roots of $f$ in a model
$E\colon y^2=cf(x)$, where $f$ is monic of degree $3$. The proof uses
explicit halving formulae to determine the Galois action on the full
tower of $2$-power torsion.
\end{abstract}

\maketitle

For an elliptic curve given by a cubic model $E:y^2=cf(x)$, with $f$ monic,
the Galois action on the roots of $f$ determines $E[2]$ as a Galois module.
This information does not in general determine the integral $2$-adic Tate
module $T_2E$. We show that $T_2E$ is determined once one also retains the
square class of $c$ and a little more local information about the relative
positions of the roots of $f(x)$.

Let $K$ be a complete discretely valued field with valuation ring $\cO_K$,
uniformiser $\pi$, and odd residue characteristic $p$. Let $v$ denote the
extension of the valuation of $K$ to $\Kbar$, normalised by $v(\pi)=1$.
Write $\fm=\{x\in\Kbar:v(x)>0\}$, and let $G_K$ denote the absolute Galois
group of $K$. For non-zero $x,y\in\Kbar$, the condition
$x/y\in1+\fm$ says that $x$ and $y$ have the same valuation and the
same first non-zero residue coefficient. Thus condition~(2) below says that
corresponding pairwise differences of roots agree up to an error of strictly
greater valuation.

\begin{theorem}\label{thm:main}
Let $E_1/K:y^2=c_1f_1(x)$ and $E_2/K:y^2=c_2f_2(x)$ be elliptic curves
with $c_1,c_2\in K^\times$ and $f_1,f_2$ monic polynomials of degree $3$
such that:
\begin{enumerate}
\item $c_1/c_2\in K^{\times2}$;
\item there is a $G_K$-equivariant bijection $\phi:\mathcal R_1\to\mathcal R_2$
between the roots of $f_1$ and $f_2$ such that
\[
 \frac{\phi(r)-\phi(s)}{r-s}\in1+\fm
\]
for all distinct $r,s\in\mathcal R_1$.
\end{enumerate}
Then $T_2E_1\cong T_2E_2$ as $\mathbb Z_2[G_K]$-modules.
\end{theorem}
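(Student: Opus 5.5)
The plan is to build, by induction on $n$, compatible $G_K$-equivariant bijections $\psi_n\colon E_1[2^n]\to E_2[2^n]$ out of explicit halving formulae, and then to show they are group isomorphisms; passing to the limit gives $T_2E_1\cong T_2E_2$. First reduce to $c_1=c_2=1$. By~(1) we may replace $c_2$ by $c_1$ without changing $E_2$ over $K$. Twisting both curves by the same quadratic character (that of $K(\sqrt{c_1})/K$) multiplies both Tate modules by the same character, so an isomorphism for the twisted curves $y^2=f_i(x)$ gives one for the originals. Write $\mathcal R_1=\{e_1,e_2,e_3\}$ and $e_i'=\phi(e_i)$.

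For $E\colon y^2=(x-e_1)(x-e_2)(x-e_3)$ and a point $P=(x_0,y_0)$, the four halves $Q$ of $P$ are given by
\[
x(Q)=x_0+\rho_1\rho_2+\rho_1\rho_3+\rho_2\rho_3,\qquad \rho_i^2=x_0-e_i,\quad \rho_1\rho_2\rho_3=\pm y_0,
\]
with a fixed sign convention. In addition,
\[
x(Q)-e_1=(\rho_1+\rho_2)(\rho_1+\rho_3),
\]
and cyclically for $e_2,e_3$. Translating $Q$ by a $2$-torsion point flips the signs of two of the $\rho_i$. I will carry the following inductive invariant for matched points $P\in E_1$ and $P'\in E_2$:
\[
\frac{x(P')-e_i'}{x(P)-e_i}\in 1+\fm \quad (i=1,2,3),\qquad \frac{y(P')}{y(P)}\in 1+\fm
\]
(with the usual conventions at $O$ and at $2$-torsion points, where $E[2]$ is matched by $\phi$). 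Since $p$ is odd and $-1\notin 1+\fm$, each $\rho_i'$ has a unique choice of sign with $\rho_i'/\rho_i\in 1+\fm$. Matching the halves of $P$ and $P'$ by this rule is canonical. Because $G_K$ preserves $v$ and $1+\fm$, the resulting bijection on halves is $G_K$-equivariant and commutes with doubling and with translation by $E[2]$.

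The key step is to re-establish the invariant for $x(Q)-e_1=(\rho_1+\rho_2)(\rho_1+\rho_3)$, where cancellation in $\rho_1+\rho_2$ is the danger. Here hypothesis~(2) enters. We have
\[
(\rho_1+\rho_2)(\rho_1-\rho_2)=e_2-e_1,\qquad (\rho_1'+\rho_2')(\rho_1'-\rho_2')=e_2'-e_1',
\]
and the ratio of the right-hand sides lies in $1+\fm$. At least one of $\rho_1\pm\rho_2$ has valuation $\min(v\rho_1,v\rho_2)$ with non-cancelling leading term. For that sign, the primed and unprimed sums have ratio in $1+\fm$ directly from the matching of the $\rho_i$. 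Dividing then gives the same conclusion for the other sign. Hence $(\rho_1'+\rho_2')/(\rho_1+\rho_2)\in1+\fm$, and similarly for all pairs, so the invariant propagates to $x(Q)$ and, via the product $\rho_1\rho_2\rho_3$, to $y(Q)$. This defines compatible equivariant bijections $\psi_n$.

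The main obstacle is showing that $\psi_n$ is additive, not just a bijection compatible with doubling and $E[2]$-translation. My first attempt is to fix a compatible system of halving chains giving a basis $P_n,Q_n$ of $E_1[2^n]$ with images $P_n',Q_n'$. I would then prove by induction on $n$ that the point $aP_n+bQ_n$ is matched with $aP_n'+bQ_n'$. The approach is to show that the invariant "ratios in $1+\fm$" is preserved by the chord–tangent addition law when the summands are matched. This would use the same no-cancellation argument on differences $x(R)-x(S)$, written via the $\rho$'s as products of factors $\rho_i\pm\rho_j$-type expressions. Any $\sigma\in G_K$ acting by $\sigma P_n=aP_n+bQ_n$ would then act on $P_n'$ by the same matrix. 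If that computation becomes unwieldy, the fallback is to characterise the group law on $E[2^n]$ purely through doubling and $E[2]$-translation on the halving tree, which $\psi$ respects by construction.
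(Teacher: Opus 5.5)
Your construction of $\psi_n$ matches the paper. That covers the sign-matched halving parameters, the no-cancellation step (the paper's Lemma~\ref{lem:sums}), equivariance, and compatibility with doubling and $E[2]$-translation. The gap is additivity, which you rightly flag as the main obstacle but do not prove, and neither of your routes closes it.

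The fallback is false. Take $n=2$ and consider bijections $E_1[4]\to E_2[4]$ that restrict to $\psi_1$ and commute with doubling and with translation by $E_1[2]$. Negation is then automatic, since $-Q=Q+2Q$. Such a map is fixed by choosing, for each non-zero $T\in E_1[2]$, which of the four halves of $\psi_1(T)$ is the image of a chosen half of $T$. That gives $4^3=64$ maps, but only $16$ of them are homomorphisms, because the kernel of $\mathrm{GL}_2(\mathbb Z/4\mathbb Z)\to\mathrm{GL}_2(\mathbb Z/2\mathbb Z)$ has order $16$. Galois equivariance does not rescue this, for instance when $G_K$ acts trivially on $E_1[4]$. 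So the halving tree alone cannot force $\psi(Q_{T_1}+Q_{T_2})=\psi(Q_{T_1})+\psi(Q_{T_2})$; some valuation-theoretic input is required.

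Your first route is missing two ingredients.

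\emph{(i) An addition formula in terms of halving parameters.} Pushing leading terms through the chord--tangent law fails because $x(R)-x(S)$, $y(R)-y(S)$ and $x(R+S)-e_i$ can all cancel. Knowing the leading terms of $x(R)-e_i$ and $x(S)-e_i$ does not determine that of $x(R)-x(S)$. Nor do you give any factorisation of $x(R)-x(S)$ into $\rho_i\pm\rho_j$-type factors when $R,S$ are halves of different points. The paper avoids the chord--tangent law by using
\[
c_i+c_j=\frac{a_ib_j+b_ia_j}{a_k+b_k}
\]
from Lemma~\ref{lem:addition}, together with
\[
(a_ib_j-b_ia_j)(a_ib_j+b_ia_j)=(r_i-r_j)\bigl(x(2P)-x(2Q)\bigr).
\]
With these, only Lemma~\ref{lem:sums} is needed, provided one knows $x(\psi_{n-1}(2P))-x(\psi_{n-1}(2Q))\sim x(2P)-x(2Q)$ whenever $2P\ne\pm 2Q$.

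\emph{(ii) A uniqueness principle for torsion points.} That last fact rests on the following: two non-zero $\ell$-power torsion points ($\ell\ne p$) whose quantities $x-r_i$ and $y$ have the same leading terms must coincide. This is Lemma~\ref{lem:unique}, from which Lemma~\ref{lem:xdifference} follows, and the paper also uses it to identify $\varphi_n(P)+\varphi_n(Q)$ with $\varphi_n(P+Q)$. It is not a formal consequence of the halving calculus. The paper proves it by passing to an extension with semistable reduction, taking a minimal model, and using that the formal group has no non-trivial prime-to-$p$ torsion. Without some such input, nothing rules out cancellation in $x(2P)-x(2Q)$, and your induction on $aP_n+bQ_n$ has no mechanism to close.
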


The proof constructs compatible $G_K$-equivariant isomorphisms
$E_1[2^n]\cong E_2[2^n]$ for every $n$. In particular, the two curves have
the same $2$-power torsion fields and the same Galois action on their
$2$-power torsion. The integral representation also determines the
$2$-primary part of the component group of the N\'eron model, including in the case of wild
reduction \cite[Expos\'e IX, \S11]{SGA7}. Hence the $2$-primary parts
of the component groups of $E_1$ and $E_2$ are isomorphic.

The hypotheses of Theorem~\ref{thm:main} are the genus $1$ case of those
appearing in \cite[Theorem~19.1]{DDMM}. For hyperelliptic curves
$C_i:y^2=c_if_i(x)$, the authors prove under these hypotheses that
$V_\ell\operatorname{Jac}C_1\cong V_\ell\operatorname{Jac}C_2$ as
$\mathbb Q_\ell[G_K]$-modules for every $\ell\ne p$. When $K$ is a local field, this also implies equality of the conductor
exponents and local root numbers. For elliptic curves
and $\ell=2$, Theorem~\ref{thm:main} strengthens this rational statement to
an isomorphism of integral Tate modules. It is natural to ask whether the
integral conclusion extends to Jacobians of higher-genus hyperelliptic curves.
A possible approach would be to combine the method used here with Stoll's
algorithm for halving points on odd-degree hyperelliptic Jacobians in Mumford
representation \cite[\S5]{Stoll}.

We also record a variant comparing $K$ with another complete discretely
valued field $K'$ of the same odd residue characteristic $p$, with a fixed
identification of their residue fields with a field $k$. Let $K^{\mathrm t}$
and $(K')^{\mathrm t}$ denote their maximal tamely ramified extensions, and
write
\[
 G_K^{\mathrm t}=\operatorname{Gal}(K^{\mathrm t}/K),\qquad
 G_{K'}^{\mathrm t}=\operatorname{Gal}((K')^{\mathrm t}/K').
\]
Fix a separable closure $\ksep$ of $k$, uniformisers $\pi$ and $\pi'$ of
$K$ and $K'$, and compatible choices of $\pi^{1/n}$ and
$(\pi')^{1/n}$ for every $n$ coprime to $p$. We identify the
prime-to-$p$ roots of unity in $K^{\mathrm t}$ and $(K')^{\mathrm t}$
through their reductions in $\ksep$. These choices determine an isomorphism
$\Psi:G_K^{\mathrm t}\xrightarrow{\sim}G_{K'}^{\mathrm t}$: the actions of
$\sigma$ and $\Psi(\sigma)$ on $\ksep$ agree, and
$\sigma(\pi^{1/n})=\zeta_n^b\pi^{1/n}$ if and only if
$\Psi(\sigma)((\pi')^{1/n})=(\zeta_n')^b(\pi')^{1/n}$.

Let $v_K$ and $v_{K'}$ denote the normalised valuations on the two tame
extensions. For $0\ne x\in K^{\mathrm t}$, write $v_K(x)=m/n$ in lowest
terms and define
\[
 \widetilde{x}_K=\overline{\frac{x}{(\pi^{1/n})^m}}\in(\ksep)^\times,
\]
where $\overline{\ \cdot \ }$ denotes reduction. Define $\widetilde{x'}_{K'}$ similarly. Write $x\sim_{K,K'}x'$ if either
$x=x'=0$, or $v_K(x)=v_{K'}(x')$ and
$\widetilde{x}_K=\widetilde{x'}_{K'}$. For $c\in K^\times$ and
$c'\in(K')^\times$, write $c\sim_{\mathrm{sq}}c'$ if
$v_K(c)\equiv v_{K'}(c')\pmod2$ and
$\widetilde c_K/\widetilde{c'}_{K'}\in k^{\times2}$. Equivalently, the
associated quadratic characters correspond under $\Psi$.

\begin{theorem}\label{thm:different}
Let $E/K:y^2=f(x)$ and $E'/K':y^2=f'(x)$ be elliptic curves,
where $f$ and $f'$ are separable cubic polynomials, and let
$\mathcal R$ and $\mathcal R'$ be their sets of roots. Assume that $\mathcal R\subset K^{\mathrm t}$,
$\mathcal R'\subset(K')^{\mathrm t}$. 
Suppose that
\begin{enumerate}
\item the leading coefficients
$c_f,c_{f'}$ satisfy $c_f\sim_{\mathrm{sq}}c_{f'}$,
\item there is a
$\Psi$-equivariant bijection $\phi:\mathcal R\to\mathcal R'$ such that
$r-s\sim_{K,K'}\phi(r)-\phi(s)$ for all distinct $r,s\in\mathcal R$.
\end{enumerate}
Then the actions of $G_K$ and $G_{K'}$ on $T_2E$ and $T_2E'$
factor through $G_K^{\mathrm t}$ and $G_{K'}^{\mathrm t}$, respectively, and
$
T_2E\cong\Psi^*T_2E'
$
as $\mathbb Z_2[G_K^{\mathrm t}]$-modules.
\end{theorem}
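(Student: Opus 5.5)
We prove Theorem~\ref{thm:different} by the halving method used for Theorem~\ref{thm:main}. Everything is written in terms of the roots and of iterated square roots, and we check that each quantity on the $K$ side and its counterpart on the $K'$ side have the same valuation and leading coefficient (the invariant $\widetilde{\cdot}$). The action of $G_K^{\mathrm t}$ on such an element depends only on this leading data, transported by $\Psi$.

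\textbf{Step 1: reduction to $c_f=c_{f'}=1$.} Hypothesis~(1) says the quadratic characters of $c_f$ and $c_{f'}$ correspond under $\Psi$. Choose tame square roots $\sqrt{c_f}$ and $\sqrt{c_{f'}}$ with $\sqrt{c_f}\sim_{K,K'}\sqrt{c_{f'}}$; this is possible because the leading coefficients differ by a square in $k^\times$. Rescaling $y$ by these square roots writes both curves in monic form. The resulting twist by the quadratic character is the same on both sides under $\Psi$, so it suffices to treat the monic case, keeping track of the sign character throughout.

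\textbf{Step 2: halving formulae.} For $P=(x_0,y_0)$ on $y^2=\prod_{r\in\mathcal R}(x-r)$, the points $Q$ with $2Q=P$ are described as follows. Choose square roots $\alpha_r=\sqrt{x_0-r}$ with $\prod_r\alpha_r=y_0$. Then
\[
x(Q)=x_0+\alpha_r\alpha_s+\alpha_r\alpha_t+\alpha_s\alpha_t,
\]
and $y(Q)$ is a polynomial in the $\alpha$'s. Iterating from a $2$-torsion point $(r,0)$, every coordinate of $E[2^n]$ is a polynomial expression in nested square roots built from the differences $r-s$.

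We prove by induction on $n$ that there is a bijection $E[2^n]\to E'[2^n]$ which is compatible with multiplication by $2$. Along this bijection, all intermediate quantities (each $x_0-r$, each $\alpha_r$, each coordinate) correspond under $\sim_{K,K'}$. The essential point is a \emph{non-cancellation} statement: in each sum such as $x(Q)-r$, the valuations and leading terms of the summands determine the valuation and leading term of the sum. This holds because the two sides agree on every summand's leading data. If there is cancellation on one side, it is controlled by identical leading-data relations on the other side. To make this rigorous, we use the factorisation
\[
x(Q)-r=(\alpha_r+\alpha_s)(\alpha_r+\alpha_t),
\]
so that only sums of two square roots occur.

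\textbf{Step 3: handling cancellation in $\alpha_r+\alpha_s$.} Here $\alpha_r^2-\alpha_s^2=s-r$, so
\[
(\alpha_r+\alpha_s)(\alpha_r-\alpha_s)=s-r.
\]
The sign of $\alpha_s$ can be chosen so that $\alpha_r+\alpha_s$ has no cancellation. The other factor is then $(s-r)/(\alpha_r+\alpha_s)$, and its leading data is determined by that of $s-r$ (hypothesis~(2)) and of $\alpha_r+\alpha_s$. Hence all leading data are determined recursively by the leading data of the root differences.

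This step is the main obstacle. The difficulty is to organise the sign choices so that they remain consistent with the product constraint $\prod_r\alpha_r=y_0$ and with the group law, so that the bijection really is a group isomorphism compatible across levels. We would use the Weil pairing, or the fact that the map is given by the same formulas on both sides, to check additivity.

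\textbf{Step 4: tameness and equivariance.} Every element produced is a nested square root of elements of $K^{\mathrm t}$, and $p$ is odd, so all coordinates lie in $K^{\mathrm t}$ and the action factors through $G_K^{\mathrm t}$.

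For $x\in K^{\mathrm t}$ with $x\sim x'$, applying $\sigma$ gives $\sigma(x)\sim\Psi(\sigma)(x')$. Indeed, $\widetilde{\sigma x}=\sigma(\widetilde x)\cdot\zeta^{\ast}$, where the root of unity $\zeta^{\ast}$ comes from the action on $\pi^{1/n}$, and this factor matches under $\Psi$ by construction.

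Two distinct points of $E[2^n]$ are distinguished by the leading data of their coordinates (or of the $\alpha$'s). This is the step where we use Step~3 and the fact that the $2^{2n}$ preimages have pairwise distinct data. It follows that the bijection intertwines $\sigma$ and $\Psi(\sigma)$. Finally, passing to the limit over $n$ gives $T_2E\cong\Psi^*T_2E'$.
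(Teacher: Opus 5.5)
There is a genuine gap: the group-homomorphism property, which you leave open at the end of Step~3. Without it you only have a $\Psi$-equivariant bijection of sets $E[2^n]\to E'[2^n]$ compatible with doubling, and that does not give $T_2E\cong\Psi^*T_2E'$ as $\mathbb Z_2[G_K^{\mathrm t}]$-modules.

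Neither of your suggestions is an argument. To use the Weil pairing you would first have to show that your bijection respects $e_{2^n}$, and you give no mechanism for that. Nor is the map ``given by the same formulas'' on both sides: it is defined by matching leading data, while the chord-and-tangent law involves differences such as $x(A)-x(B)$ in which cancellation can occur.

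The paper closes this with two further ingredients.
\begin{enumerate}
\item An explicit halving triple for a sum. If $P,Q$ have triples $(a_i),(b_i)$ over $2P\ne\pm2Q$, then $P+Q$ has triple $(c_i)$ with $c_i+c_j=(a_ib_j+b_ia_j)/(a_k+b_k)$. Combine this with $a_i^2-b_i^2=x(2P)-x(2Q)$ and $(a_ib_j-b_ia_j)(a_ib_j+b_ia_j)=(r_i-r_j)(x(2P)-x(2Q))$. Your own Step~3 device (of $u\pm v$ one does not cancel, and the product is known) then shows that $\varphi_n(P+Q)$ and $\varphi_n(P)+\varphi_n(Q)$ have the same leading data.
\item A uniqueness lemma: two nonzero $2$-power torsion points on the same curve whose $x-r_i$ ($i=1,2,3$) and $y$ agree in leading data are equal. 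This is exactly what you assert without proof in Step~4 (``pairwise distinct data''), and it is not formal: it fails for non-torsion points near $O$. The paper proves it on a semistable minimal model. There the slope of the line through $P$ and $-P'$ has negative valuation, so $P-P'$ lies in the formal group, which has no non-trivial prime-to-$p$ torsion. The same lemma is needed to get $x(\varphi_{n-1}(2P))-x(\varphi_{n-1}(2Q))\sim_{K,K'}x(2P)-x(2Q)$, since you need a root $r$ with $x(2P)-r\not\sim x(2Q)-r$.
\end{enumerate}
The degenerate cases $2P=\pm2Q$, or $P$ or $Q$ in $E[2]$, follow from $\varphi_n(-P)=-\varphi_n(P)$ and $\varphi_n(P+T)=\varphi_n(P)+\varphi_1(T)$. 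These come from the sign-change rules for halving triples.

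Three smaller points.

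First, the product constraint is not actually an obstacle, and the signs in Step~3 are not yours to choose: they are dictated by which half-point you are matching. Define $\alpha_r'$ as the unique square root of $x(P')-r'$ with $\alpha_r'\sim_{K,K'}\alpha_r$; it is unique because $p\ne2$. Then the ratio of $\prod_r\alpha_r'$ to its required value is a square root of $1$ lying in $1+\mathfrak m_{(K')^{\mathrm t}}$, hence equals $1$. Your Step~3 observation then gives $\alpha_r'+\alpha_s'\sim_{K,K'}\alpha_r+\alpha_s$ whether or not there is cancellation. This canonical choice, together with your correct observation that $x\sim_{K,K'}x'$ implies $\sigma x\sim_{K,K'}\Psi(\sigma)x'$, yields equivariance inductively without needing to distinguish points.

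Second, in Step~1 you cannot in general choose $\sqrt{c_f}\sim_{K,K'}\sqrt{c_{f'}}$. The hypothesis only gives valuations congruent mod $2$ and leading coefficients equal up to $k^{\times2}$; take $c_f=1$, $c_{f'}=(\pi')^2$. Nor is it needed: $c_f\sim_{\mathrm{sq}}c_{f'}$ says $\chi_{c_f}=\chi_{c_{f'}}\circ\Psi$, and twisting both curves to monic form does not change the roots.

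Third, your tameness argument via nested square roots is fine. It is an alternative to the paper's argument through the pro-$2$ kernel of $\operatorname{GL}_2(\mathbb Z_2)\to\operatorname{GL}_2(\mathbb F_2)$.
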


Here $\Psi^*T_2E'$ denotes the same $\mathbb Z_2$-module $T_2E'$, with
$G_K^{\mathrm t}$-action $\sigma\cdot P=\Psi(\sigma)P$. Note that the assumptions
$\mathcal R\subset K^{\mathrm t}$ and
$\mathcal R'\subset(K')^{\mathrm t}$ are automatic unless $p=3$.

As an application, Theorem~\ref{thm:main} gives the following explicit local
constancy result.

\begin{corollary}\label{cor:explicit}
Let $E_1/K:y^2=c_1f_1(x)$ and $E_2/K:y^2=c_2f_2(x)$ be elliptic curves
with $c_1,c_2\in K^\times$ and $f_1,f_2\in\cO_K[x]$ monic polynomials of
degree $3$. Let $\mathcal R_1$ denote the set of roots of $f_1$, and put
$d=\max_{r\ne s\in\mathcal R_1}v(r-s)$ and
$e=\max_{r\in\mathcal R_1}v(f_1'(r))$. Suppose that
$c_1/c_2\in K^{\times2}$ and $f_1\equiv f_2\pmod{\pi^N\cO_K[x]}$ for
some integer $N>d+e$. Then $T_2E_1\cong T_2E_2$ as
$\mathbb Z_2[G_K]$-modules.
\end{corollary}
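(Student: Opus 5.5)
The plan is to deduce the corollary from Theorem~\ref{thm:main}. Condition~(1) is assumed, so everything reduces to producing a $G_K$-equivariant bijection $\phi:\mathcal R_1\to\mathcal R_2$ satisfying condition~(2). This is a Krasner/Hensel-type statement: each root of $f_1$ should be close to a unique root of $f_2$, and the roots should move by less than the smallest scale at which they differ.

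\textbf{Step 1: setup.} Write $g=f_2-f_1$. Then $g$ has coefficients in $\pi^N\cO_K$, so $v(g(r))\ge N$ for every $r\in\mathcal R_1$, since roots of a monic integral polynomial are integral. Also $f_1'(r)=\prod_{s\ne r}(r-s)$, so $v(f_1'(r))=\sum_{s\ne r}v(r-s)\le e$. Each $v(r-s)\le d$, and $e\ge v(r-s)+v(r-t)$ for the two other roots $s,t$.

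\textbf{Step 2: locating the roots of $f_2$.} Fix $r\in\mathcal R_1$. Since $f_1(r)=0$, we have $\prod_{r_2\in\mathcal R_2}(r-r_2)=f_2(r)=g(r)$, which has valuation at least $N>d+e$. Hence some $r_2$ satisfies $v(r-r_2)\ge N/3$. I would rather argue with a Newton-polygon or Hensel estimate: $f_2(r)=g(r)$ has valuation greater than $2v(f_1'(r))$ once $N>2e$. If $N\le 2e$, I use $N>d+e$ together with $v(f_2'(r)-f_1'(r))\ge N>e$, which gives $v(f_2'(r))=v(f_1'(r))$. The Newton step $r\mapsto r-f_2(r)/f_2'(r)$ shows that there is a root $\phi(r)$ of $f_2$ with
\[
v(\phi(r)-r)\ge v(f_2(r))-v(f_2'(r))\ge N-e>d.
\]
I expect this to be the main obstacle. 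Standard Hensel needs $v(f(r))>2v(f'(r))$, which we may not have. I would instead argue directly with the factorisation. Suppose every root $r_2$ of $f_2$ satisfied $v(r-r_2)\le d$ \emph{in the pattern dictated by} $\mathcal R_1$. Comparing $\sum_{r_2}v(r-r_2)\ge N>d+e$ with the distances $v(r-s)$ then forces one root $r_2$ to have $v(r-r_2)>d$. To make this precise, I would order the roots and use the ultrametric inequality $v(r-r_2)\ge\min(v(r-s),v(s-r_2))$. The point is that at most one root of $f_2$ can be closer than $d$ to each element, so the two ``far'' roots contribute at most $e$ in total. This forces the remaining root to satisfy $v(r-r_2)\ge N-e>d$.

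\textbf{Step 3: the bijection and condition~(2).} Since $v(\phi(r)-r)>d\ge v(r-s)$ for distinct $r,s\in\mathcal R_1$, the roots $\phi(r)$ are distinct. Indeed, $\phi(r)=\phi(s)$ would give $v(r-s)>d$. So $\phi$ is a bijection. It is $G_K$-equivariant because $\sigma\phi(r)$ is a root of $f_2$ with $v(\sigma\phi(r)-\sigma r)>d$, and uniqueness of such a root gives $\sigma\phi(r)=\phi(\sigma r)$. Finally,
\[
\phi(r)-\phi(s)=(r-s)+(\phi(r)-r)-(\phi(s)-s),
\]
and the last two terms have valuation greater than $d\ge v(r-s)$. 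Hence $(\phi(r)-\phi(s))/(r-s)\in1+\fm$, and Theorem~\ref{thm:main} applies.
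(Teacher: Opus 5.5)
Your overall strategy matches the paper's: reduce to Theorem~\ref{thm:main} by matching each $r\in\mathcal R_1$ with a root of $f_2$ at distance $>d$. Your Step~3 is essentially the paper's final paragraph. The gap is in Step~2, which carries all the real content.

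You assert the Newton-step bound $v(\phi(r)-r)\ge v(f_2(r))-v(f_2'(r))$ and then rightly doubt it, since Hensel's hypothesis $v(f_2(r))>2v(f_2'(r))$ can fail. For example, take $f_1=x(x-\pi)(x+\pi)$ and $f_2=f_1+\pi^4$, so $d=1$, $e=2$, $N=4$. Your fallback ``factorisation'' argument does not close the gap. The only information about $f_2$ it uses is $v(f_2(r))\ge N$. That, together with distances inside $\mathcal R_1$, cannot exclude two roots of $f_2$ both lying very close to $r$. The claims that ``at most one root of $f_2$ can be closer than $d$ to each element'' and that the roots of $f_2$ sit ``in the pattern dictated by $\mathcal R_1$'' are precisely what must be proved. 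So the argument is circular; the ultrametric inequality only relates points you already control.

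The missing ingredient is the derivative, which you had already computed: $v(f_2'(r))=v(f_1'(r))\le e$, because $v(f_2'(r)-f_1'(r))\ge N>e$. If $f_2(r)\ne0$ (otherwise take $\phi(r)=r$), the logarithmic derivative gives
\[
\frac{f_2'(r)}{f_2(r)}=\sum_{\rho\in\mathcal R_2}\frac{1}{r-\rho},
\qquad\text{so}\qquad
\max_{\rho\in\mathcal R_2}v(r-\rho)\ge v(f_2(r))-v(f_2'(r))\ge N-e>d.
\]
This is exactly your Newton-step bound, with no Hensel-type hypothesis. Equivalently, let $\rho_1$ be the root of $f_2$ nearest to $r$. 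Every term of $f_2'(r)=\sum_i\prod_{j\ne i}(r-\rho_j)$ has valuation at least $v(r-\rho_2)+v(r-\rho_3)$. Hence the two far roots contribute at most $v(f_2'(r))\le e$, which is the claim you wanted.

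You also invoke uniqueness of such a root, both for $G_K$-equivariance and for $\phi$ to be well defined, without proving it. It follows from your injectivity argument: $\phi$ is then a bijection, and if $\rho=\phi(s)$ satisfies $v(\rho-r)>d$, then $v(r-s)>d$, forcing $s=r$.

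With these repairs your proof is complete and slightly shorter than the paper's. The paper instead reads off existence and uniqueness together from the Newton polygon of $f_2(r_1+x)$, at the cost of also estimating $\tfrac12 f_2''(r_1)$.
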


Kisin's work \cite{Kisin} on local constancy of $\ell$-adic Galois
representations implies a non-explicit version of Corollary~\ref{cor:explicit}.
More precisely, for each fixed elliptic curve $E_1$ and each prime
$\ell\ne p$, there is an integer $N_0$ such that
$T_\ell E_1\cong T_\ell E_2$ whenever
$f_1\equiv f_2\pmod{\pi^{N_0}\cO_K[x]}$ and
$c_1/c_2\in K^{\times2}$. Corollary~\ref{cor:explicit} gives an explicit
value when $\ell=2$.

\section{Halving points}

Throughout this section, $K$ denotes the complete discretely valued field
fixed in the introduction. Let $E/K$ be an elliptic curve given over
$\Kbar$ by
$E:y^2=(x-\alpha)(x-\beta)(x-\gamma)$. To lift the given bijection on
$2$-torsion in Theorem~\ref{thm:main} to compatible Galois-equivariant
bijections on $2^n$-torsion, we need an explicit description of half-points
and of how their parameters behave under negation, translation by
$2$-torsion, and addition.

For $P=(a,b)\in E(\Kbar)$, Bekker and Zarhin \cite[\S2]{BekkerZarhin} give
a bijection between the points $Q\in E(\Kbar)$ satisfying $2Q=P$ and the
triples $(t_\alpha,t_\beta,t_\gamma)\in\Kbar^3$ satisfying $t_r^2=a-r$
for $r\in\{\alpha,\beta,\gamma\}$ and
$t_\alpha t_\beta t_\gamma=-b$. We record the identities used below.

\begin{lemma}[\cite{BekkerZarhin}]\label{lem:halving}
If $Q$ corresponds to $(t_\alpha,t_\beta,t_\gamma)$, then:
\begin{enumerate}
\item
$
 x(Q)=a+t_\alpha t_\beta+t_\alpha t_\gamma+t_\beta t_\gamma$, and 
 $y(Q)=-(t_\alpha+t_\beta)(t_\alpha+t_\gamma)(t_\beta+t_\gamma).
$
In particular, $x(Q)-r=(t_r+t_s)(t_r+t_u)$ for every permutation
$r,s,u$ of $\alpha,\beta,\gamma$.
\item The point $-Q$, viewed as a half-point of $-P$, corresponds to
$(-t_\alpha,-t_\beta,-t_\gamma)$.
\item If $T_\alpha=(\alpha,0)$, then $Q+T_\alpha$ corresponds to
$(t_\alpha,-t_\beta,-t_\gamma)$, and similarly for the other two roots.
\end{enumerate}
\end{lemma}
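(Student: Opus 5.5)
The plan is to verify all three identities by direct algebra from the defining relations $t_r^2=a-r$ and $t_\alpha t_\beta t_\gamma=-b$. We take the Bekker–Zarhin correspondence to be the map sending a triple to the point whose coordinates are given by the formulae in (1). That this map is a bijection onto the half-points of $P$ is the content of \cite[\S2]{BekkerZarhin} and is assumed here. Concretely, it suffices to show three things: the formulae in (1) define a point on $E$; that point is a half-point of $P$; and the operations in (2) and (3) on triples correspond to the stated operations on points.

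For (1), I will first derive the factorisation of $x(Q)-r$. Expanding gives
\[
(t_r+t_s)(t_r+t_u)=t_r^2+t_rt_s+t_rt_u+t_st_u=(a-r)+t_\alpha t_\beta+t_\alpha t_\gamma+t_\beta t_\gamma .
\]
This equals $x(Q)-r$ by definition. Multiplying over $r\in\{\alpha,\beta,\gamma\}$, each pairwise sum $t_r+t_s$ appears exactly twice, so
\[
(x(Q)-\alpha)(x(Q)-\beta)(x(Q)-\gamma)=\prod_{\text{pairs}}(t_r+t_s)^2=y(Q)^2 .
\]
Hence $Q\in E$. To see that $2Q=P$, I will use the tangent line $y=\lambda x+\nu$ at $Q$. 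This line meets $E$ at $Q$ with multiplicity two and at $-2Q$, which gives
\[
(x-\alpha)(x-\beta)(x-\gamma)-(\lambda x+\nu)^2=(x-x(Q))^2(x-x(2Q)).
\]
Evaluating at $x=r$ yields $a-r=\bigl((\lambda r+\nu)/(x(Q)-r)\bigr)^2$ when $2Q=P$. So on the inverse side one sets $t_r=\pm(\lambda r+\nu)/(x(Q)-r)$, with signs pinned down by the product condition. This step only matters if one wants to reprove the bijection itself rather than cite it.

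For (2), replacing each $t_r$ by $-t_r$ leaves $x(Q)$ unchanged, because the formula is quadratic in the $t$'s. It negates $y(Q)$, since $y(Q)$ is a product of three linear factors. The new triple satisfies $t_r^2=a-r$ and has product $+b$, so it is a valid triple for $-P=(a,-b)$, and the point it gives is $(x(Q),-y(Q))=-Q$.

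For (3), note that $2T_\alpha=0$, so $Q+T_\alpha$ is again a half-point of $P$. The triple $(t_\alpha,-t_\beta,-t_\gamma)$ still has product $-b$, so it is admissible for $P$. For the $x$-coordinate, I will compare with the standard translation formula
\[
x(Q+T_\alpha)-\alpha=\frac{(\alpha-\beta)(\alpha-\gamma)}{x(Q)-\alpha}.
\]
Applying the factorisation from (1) to the new triple gives $x'-\alpha=(t_\alpha-t_\beta)(t_\alpha-t_\gamma)$. Since $(t_\alpha-t_\beta)(t_\alpha+t_\beta)=t_\alpha^2-t_\beta^2=\beta-\alpha$, and similarly for $\gamma$, we get
\[
x'-\alpha=\frac{(\alpha-\beta)(\alpha-\gamma)}{(t_\alpha+t_\beta)(t_\alpha+t_\gamma)}=\frac{(\alpha-\beta)(\alpha-\gamma)}{x(Q)-\alpha},
\]
as required. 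The main obstacle is the sign of the $y$-coordinate, since the $x$-coordinate alone only determines the point up to $\pm$. I would handle it by computing $y'=-(t_\alpha-t_\beta)(t_\alpha-t_\gamma)(t_\beta+t_\gamma)$ explicitly. The same difference-of-squares identity rewrites it as
\[
y'=-\frac{(\alpha-\beta)(\alpha-\gamma)(t_\beta+t_\gamma)}{(t_\alpha+t_\beta)(t_\alpha+t_\gamma)}.
\]
I would then compare this with the translation formula $y(Q+T_\alpha)=-(\alpha-\beta)(\alpha-\gamma)\,y(Q)/(x(Q)-\alpha)^2$. Substituting the product formulae from (1) for $y(Q)$ and $x(Q)-\alpha$ should give equality after cancelling $(t_\alpha+t_\beta)(t_\alpha+t_\gamma)$.

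There is a fallback if this sign check becomes messy. The two candidate triples $(t_\alpha,-t_\beta,-t_\gamma)$ and $(-t_\alpha,t_\beta,t_\gamma)$ correspond to points with the same $x$-coordinate. The second has product $+b$, so by (2) it gives a half-point of $-P$ rather than of $P$. Since the correspondence is a bijection onto the half-points of $P$, $Q+T_\alpha$ must therefore be the point attached to $(t_\alpha,-t_\beta,-t_\gamma)$. The cases $T_\beta$ and $T_\gamma$ follow by symmetry.
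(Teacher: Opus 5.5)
The paper gives no argument for this lemma beyond the citation to Bekker--Zarhin. Your framework (cite the bijection, verify the identities by hand) is compatible with that, and your checks of the factorisation in (1), of $Q\in E$, and of (2) are correct. One small inconsistency: you list ``the point is a half-point of $P$'' among the things to show, but only sketch the inverse direction. The forward check is short. With $A=t_\beta+t_\gamma$, $B=t_\alpha+t_\gamma$, $C=t_\alpha+t_\beta$ one has $f'(x(Q))=ABC(A+B+C)$ and $y(Q)=-ABC$. So the tangent slope at $Q$ is $-(t_\alpha+t_\beta+t_\gamma)$, and duplication returns $x(2Q)=a$ and $y(2Q)=-t_\alpha t_\beta t_\gamma=b$.

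The genuine error is the sign check in (3). For the triple $(t_\alpha,-t_\beta,-t_\gamma)$ the third factor is $t'_\beta+t'_\gamma=-(t_\beta+t_\gamma)$, so
\[
 y'=+(t_\alpha-t_\beta)(t_\alpha-t_\gamma)(t_\beta+t_\gamma)
 =\frac{(\alpha-\beta)(\alpha-\gamma)(t_\beta+t_\gamma)}{(t_\alpha+t_\beta)(t_\alpha+t_\gamma)}.
\]
You dropped this minus sign. Substituting $y(Q)=-(t_\alpha+t_\beta)(t_\alpha+t_\gamma)(t_\beta+t_\gamma)$ into your (correct) translation formula gives exactly this expression for $y(Q+T_\alpha)$. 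Your version of $y'$ would therefore come out as $-y(Q+T_\alpha)$, which is the opposite of what you want. With the sign corrected, the comparison goes through in every case, since $x(Q)-\alpha=(t_\alpha+t_\beta)(t_\alpha+t_\gamma)\neq0$ for distinct roots.

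Your fallback does not close this gap. It distinguishes $(t_\alpha,-t_\beta,-t_\gamma)$ from $(-t_\alpha,t_\beta,t_\gamma)$ only through their products $-b$ and $+b$. These coincide when $b=0$, i.e.\ when $P$ is a non-zero $2$-torsion point. For example, take $P=T_\beta$. Then $t_\beta=0$, and both $(t_\alpha,0,-t_\gamma)$ and $(-t_\alpha,0,t_\gamma)$ are admissible for $P=-P$. They give the two points $\pm(Q+T_\alpha)$. Since $-Q=Q+T_\beta$ here, these are $Q+T_\alpha$ and $Q+T_\gamma$, both halves of $P$. No counting argument determines which triple gives which point; deciding that is exactly the content of (3).

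This case is not marginal. The step $n=2$ of the induction in the proof of Theorem~\ref{thm:main}, and the identity $(\dagger)$ there, apply (3) to half-points of non-zero $2$-torsion points. So you must rely on the corrected explicit computation of $y'$, not on the fallback.
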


To show that the maps constructed below are group homomorphisms, we also need
to understand the halving triple associated to a sum. The following formula is
not stated in \cite{BekkerZarhin}, but follows by a direct computation from the
usual chord-and-tangent formulae.

\begin{lemma}\label{lem:addition}
Let $A=(x_A,y_A),B=(x_B,y_B)\in E(\Kbar)$, with $A\ne\pm B$. Suppose
that $P$ and $Q$ are half-points of $A$ and $B$, corresponding to
triples $(a_\alpha,a_\beta,a_\gamma)$ and
$(b_\alpha,b_\beta,b_\gamma)$, respectively. Then $P+Q$, viewed as a
half-point of $A+B$, corresponds to $(c_\alpha,c_\beta,c_\gamma)$, where
\[
 c_\alpha=\frac{b_\alpha a_\beta a_\gamma-a_\alpha b_\beta b_\gamma}{x_A-x_B},
 \quad
 c_\beta=\frac{a_\alpha b_\beta a_\gamma-b_\alpha a_\beta b_\gamma}{x_A-x_B},
 \quad
 c_\gamma=\frac{a_\alpha a_\beta b_\gamma-b_\alpha b_\beta a_\gamma}{x_A-x_B}.
\]
In particular, for every permutation $i,j,k$ of $\alpha,\beta,\gamma$,
\[
 c_i+c_j=\frac{a_i b_j+b_i a_j}{a_k+b_k}.
\]
\end{lemma}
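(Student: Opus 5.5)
The plan is to check directly that the proposed triple $(c_\alpha,c_\beta,c_\gamma)$ is a valid halving triple for $A+B$, and then to identify the half-point it gives with $P+Q$ by comparing coordinates via Lemma~\ref{lem:halving}(1). Write $\lambda=(y_A-y_B)/(x_A-x_B)$. The chord-and-tangent formulae give $x_{A+B}=\lambda^2-x_A-x_B+(\alpha+\beta+\gamma)$ and $y_{A+B}=-\lambda(x_{A+B}-x_A)-y_A$. Throughout we use the relations $a_r^2=x_A-r$, $b_r^2=x_B-r$, $a_\alpha a_\beta a_\gamma=-y_A$ and $b_\alpha b_\beta b_\gamma=-y_B$.

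\textbf{Step 1: $c_r^2=x_{A+B}-r$.} Expand
\[
(x_A-x_B)^2c_\alpha^2=(x_B-\alpha)(x_A-\beta)(x_A-\gamma)+(x_A-\alpha)(x_B-\beta)(x_B-\gamma)-2y_Ay_B.
\]
Replace $(x_A-\beta)(x_A-\gamma)$ by $y_A^2/(x_A-\alpha)$, and similarly for $B$. The right-hand side should then reduce to $(y_A-y_B)^2-(x_A-x_B)^2(x_A+x_B+\alpha-\alpha-\beta-\gamma)$, up to the standard identity $y_A^2-y_B^2=(x_A-x_B)(\dots)$ coming from the cubic. This is a routine polynomial check. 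The cases $r=\beta,\gamma$ follow by symmetry.

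\textbf{Step 2: $c_\alpha c_\beta c_\gamma=-y_{A+B}$.} This is again a polynomial identity in the $a_r,b_r$. I would verify it by expanding the product of the three numerators and reducing with the relations above.

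By Steps 1 and 2 and the Bekker--Zarhin bijection, the triple corresponds to some half-point $R$ of $A+B$.

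\textbf{Step 3: the ``in particular'' formula.} I would prove this next, since it is what makes the coordinates of $R$ computable. Compute
\[
(c_i+c_j)(x_A-x_B)=a_k(b_ia_j+a_ib_j)-b_k(a_ib_j+b_ia_j)=(a_k-b_k)(a_ib_j+b_ia_j).
\]
Then use $(a_k-b_k)(a_k+b_k)=a_k^2-b_k^2=x_A-x_B$, which gives the stated identity. By Lemma~\ref{lem:halving}(1),
\[
x(R)-k=(c_k+c_i)(c_k+c_j)=\frac{(a_jb_k+b_ja_k)(a_ib_k+b_ia_k)}{(a_i+b_i)(a_j+b_j)},
\]
and $y(R)=-\prod(c_i+c_j)$ is given by a similarly explicit expression.

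\textbf{Step 4: identify $R$ with $P+Q$.} This is the main obstacle. The half-points of $A+B$ are $P+Q+T$ for $T\in E[2]$. These have pairwise different triples, related by the sign changes of Lemma~\ref{lem:halving}(3). So it suffices to show $x(R)=x(P+Q)$ and $y(R)=y(P+Q)$.

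To do this, compute $x(P+Q)$ by the chord formula, using
\[
x(P)-r=(a_r+a_s)(a_r+a_u),\qquad y(P)=-\prod(a_i+a_j),
\]
and the analogous expressions for $Q$. The slope $(y(P)-y(Q))/(x(P)-x(Q))$ should factor after substituting these products. One then checks that $x(P+Q)-k$ equals the expression for $x(R)-k$ from Step 3, and does the same for the $y$-coordinates.

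If this direct computation is too messy, I would instead argue by rigidity. Both $R$ and $P+Q$ are given by rational functions on the variety of pairs of halving triples, which is irreducible. Two such sections differing by a $2$-torsion point must differ by a constant $T$. Specialising to a convenient configuration, for instance letting $B$ approach a point where the formulas visibly agree (such as $Q$ near a $2$-torsion translate), should show that $T=O$.
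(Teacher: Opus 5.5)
Your plan is correct and follows the paper's route: the paper only asserts that the lemma follows by direct computation from the chord-and-tangent formulae, the identities you set up in Steps 1--3 all hold (including the sign in $c_\alpha c_\beta c_\gamma=-y_{A+B}$, which Step 1 alone only gives up to $\pm$), and you correctly identify Step 4 as the one non-routine point. Your rigidity fallback finishes Step 4 cleanly if you specialise $Q\to O$: there the three $b_r$ share the same leading term, so $b_r/x_B\to0$ and $b_sb_u/x_B\to1$, hence $c_r\to a_r$ and $R\to P=\lim(P+Q)$, which forces the constant $2$-torsion discrepancy to be $O$.
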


We will use the following notation in the remainder of the paper.

\begin{notation}\label{not:sim}
For $a,b\in\Kbar$, write $a\sim b$ if either $a=b=0$, or $a,b\ne0$
and $a/b\in1+\fm$. Thus, if $a,b\ne0$, then $a\sim b$ if and only if
$v(a-b)>v(a)=v(b)$.
\end{notation}

\begin{lemma}\label{lem:sums}
Let $a,a',b,b'\in\Kbar$. Suppose that $a'\sim a$, $b'\sim b$, and
$a'^2-b'^2\sim a^2-b^2$, where $a\ne\pm b$. Then
$a'+b'\sim a+b$ and $a'-b'\sim a-b$.
\end{lemma}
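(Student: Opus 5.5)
The plan is to pass to the variables $u=a+b$, $w=a-b$ and $u'=a'+b'$, $w'=a'-b'$. The hypothesis $a'^2-b'^2\sim a^2-b^2$ then reads $u'w'\sim uw$, and the goal is $u'\sim u$ and $w'\sim w$. Since $a\ne\pm b$, both $u$ and $w$ are non-zero, so $uw\ne0$. By Notation~\ref{not:sim} this forces $u'w'\ne0$, so $u',w'\ne0$ as well. From here the argument uses only two facts. First, $\sim$ restricted to non-zero elements means ``quotient lies in $1+\fm$'', and $1+\fm$ is a multiplicative group. Second, $2$ is a unit because $p$ is odd.

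First I bound the perturbations. Put $m=\min(v(a),v(b))$. This is finite because $a,b$ are not both zero: $a=b=0$ would give $a=b$, which is excluded. If $a\ne0$, then $a'\sim a$ gives $v(a'-a)>v(a)\ge m$. If $a=0$, then $a'=0$ by the definition of $\sim$, so $a'-a=0$. Treating $b$ the same way gives $v(a'-a)>m$ and $v(b'-b)>m$, with the convention $v(0)=\infty$. Hence
\[
v(u'-u)>m \quad\text{and}\quad v(w'-w)>m.
\]

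Next I locate $m$ among the valuations of $u$ and $w$. Since $a=(u+w)/2$ and $b=(u-w)/2$, and $2$ is a unit, we get $m\ge\min(v(u),v(w))$. Conversely $u=a+b$ and $w=a-b$ have valuation at least $m$. Therefore $\min(v(u),v(w))=m$. Since the roles of $u$ and $w$ are symmetric (swap $b\leftrightarrow-b$, $b'\leftrightarrow-b'$), we may assume $v(w)=m$.

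Then $v(w'-w)>m=v(w)$, so $w'\sim w$, i.e.\ $w'/w\in1+\fm$. Finally,
\[
\frac{u'}{u}=\frac{u'w'}{uw}\cdot\Bigl(\frac{w'}{w}\Bigr)^{-1}\in1+\fm,
\]
because $u'w'\sim uw$ and $1+\fm$ is a group. So $u'\sim u$ as well, which is the claim. The only real obstacle is that the naive bound $v(u'-u)>m$ does not by itself control $u'$ when $u$ has valuation strictly larger than $m$, as happens when $a$ and $b$ nearly cancel. This is exactly why the argument controls the factor of valuation $m$ directly and recovers the other factor from the product hypothesis, using $2\in\cO_K^\times$ to guarantee that such a factor exists.
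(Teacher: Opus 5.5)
Your proposal is correct and is essentially the paper's argument. The paper picks a sign $\varepsilon$ with $v(a+\varepsilon b)=\min(v(a),v(b))$, deduces $a'+\varepsilon b'\sim a+\varepsilon b$ from the perturbation bounds, and recovers the other factor from the product hypothesis. Your version differs only in stating explicitly that such a sign exists because $2\in\cO_K^\times$, and in handling the cases $a=0$ or $b=0$ together with the general case rather than separately.
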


\begin{proof}
If $a=0$ or $b=0$, the result is immediate. Thus we may assume that
$a,a',b,b'$ are all non-zero. For one choice of sign, say
$\varepsilon\in\{\pm1\}$, one has
$v(a+\varepsilon b)=\min(v(a),v(b))$. It follows from $a'\sim a$ and
$b'\sim b$ that $a'+\varepsilon b'\sim a+\varepsilon b$. The other
congruence now follows from
\[
 \frac{(a'+b')(a'-b')}{(a+b)(a-b)}
 =\frac{a'^2-b'^2}{a^2-b^2}\equiv1\pmod{\fm}.
\]
\end{proof}

\begin{lemma}\label{lem:unique}
Let $E:y^2=(x-r_1)(x-r_2)(x-r_3)$ be an elliptic curve over $K$, with
$r_i\in\Kbar$. Suppose that $P=(a,b)$ and $P'=(a',b')$ are non-zero
points of $E[\ell^n]$, where $\ell\ne p$. If $a-r_i\sim a'-r_i$ for
$i=1,2,3$ and $b\sim b'$, then $P=P'$.
\end{lemma}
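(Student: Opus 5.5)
The plan is to rescale coordinates so that $P$ and $P'$ become integral points with the same reduction, show that this common reduction is a non-singular point of the reduced curve, and then use the fact that the kernel of reduction has no non-trivial $\ell$-power torsion for $\ell\ne p$.

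\emph{Choice of field and scaling.} Work over a finite extension $L/K$ containing $r_1,r_2,r_3$, the coordinates of $P,P'$, and a suitable scaling constant $\lambda$ chosen below. The valuation on $L$ extends $v$.

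Put $v_i=v(a-r_i)$. By hypothesis these are also $v(a'-r_i)$. Since $(a-r_i)-(a-r_j)=r_j-r_i$, the ultrametric inequality shows that the two smallest of the $v_i$ are equal. Relabel so that $m:=v_1=v_2\le v_3$. Choose $\lambda\in L^\times$ with $v(\lambda^2)=m$ (enlarging $L$ if necessary), and substitute
\[
X=(x-r_1)/\lambda^2,\qquad Y=y/\lambda^3 .
\]
This gives the model
\[
Y^2=X(X-s_2)(X-s_3),\qquad s_i=(r_i-r_1)/\lambda^2 .
\]
We have $v(r_i-r_1)\ge\min(v_i,v_1)=m$, so the $s_i$ are integral and the model is an integral Weierstrass model over $\cO_L$.

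\emph{The two points have the same reduction.} Write $u=X(P)=(a-r_1)/\lambda^2$. It is a unit, and $u-s_i=(a-r_i)/\lambda^2$ has valuation $v_i-m\ge 0$. Hence $Y(P)$ is integral, and so $P$ is an integral point. The hypotheses $a-r_i\sim a'-r_i$ give $X(P')-s_i\sim X(P)-s_i$ for each $i$ (with $s_1=0$), and $b\sim b'$ gives $Y(P')\sim Y(P)$. Therefore $P$ and $P'$ are integral points with the same reduction $(\bar u,\bar Y(P))$; note that if $Y(P)$ is non-unit then $Y(P')$ is too, so both reduce to $\bar Y=0$.

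\emph{The common reduction is non-singular.} This is the key step. Since $2\in\cO_L^\times$, the reduced point is singular only if $\bar Y=0$ and $\bar u$ is a repeated root of $X(X-\bar s_2)(X-\bar s_3)$. But $\bar u\ne0$ and $\bar u\ne\bar s_2$, because $v_1=v_2=m$. So the only possible coincidence is $\bar u=\bar s_3$, which occurs when $v_3>m$. In that case $\bar s_3$ differs from both $0$ and $\bar s_2$, so it is a simple root. Hence the reduction is always a non-singular point.

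\emph{Conclusion via the formal group.} Reduction $E_0(L)\to\widetilde E_{\mathrm{ns}}(k_L)$ is a group homomorphism, so $P-P'$ lies in its kernel $E_1(L)$. This kernel is isomorphic to the points of the formal group on $\fm\cap L$, which is a pro-$p$ group and therefore has no non-trivial $\ell$-power torsion for $\ell\ne p$. Since $P-P'\in E[\ell^n]$, it follows that $P=P'$.

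I expect the main obstacle to be the non-singularity step: the normalisation $v(\lambda^2)=\min_i v(a-r_i)$ is chosen exactly so that it works. A careless choice, such as scaling by the maximal valuation or centring at the wrong root, either loses integrality or allows the reduced point to land on a node.
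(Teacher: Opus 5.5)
Your proof has a genuine gap at its first step, and it breaks the key step. The claim that the two smallest of the $v_i$ coincide is false. The ultrametric inequality constrains the triple $v(a-r_i)$, $v(a-r_j)$, $v(r_i-r_j)$ jointly, but the differences $r_i-r_j$ are arbitrary, so the $v_i$ may have a unique minimum. For instance, take $y^2=x(x-1)(x-1-\pi)$ and the $4$-torsion points $Q$ with $2Q=(1+\pi,0)$. These correspond to triples with $t_{1+\pi}=0$, $t_0^2=1+\pi$, $t_1^2=\pi$, and Lemma~\ref{lem:halving}(1) gives $v(x(Q))=0$ and $v(x(Q)-1)=v(x(Q)-1-\pi)=\tfrac12$. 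Whenever $m=v_1<v_2,v_3$, your scaling gives $\bar u=\bar s_2=\bar s_3$ and $\bar Y=0$. So $P$ and $P'$ both reduce to the node of $Y^2=X(X-\bar s_2)^2$, and then $P,P'\notin E_0(L)$. A common singular reduction does not place $P-P'$ in $E_1(L)$: such points can lie on different non-identity components of the N\'eron model. This is exactly the case of torsion points on non-identity components under multiplicative reduction, so it is the essential case, not a degenerate one. Equality of reductions is too weak here; you need the finer information in the hypothesis.

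To close the gap, use the hypothesis quantitatively, as the paper does. If $a=a'$, then $b'=\pm b$, and $b\sim b'$ forces $b=b'$ since $p$ is odd. Otherwise the hypotheses give $v(a-a')>v_i$ for all $i$ and $v(b+b')=v(b)=\tfrac12\sum_i v_i$. In your scaled model, the slope $\Lambda$ of the line through $P$ and $-P'$ therefore satisfies
\[
v(\Lambda)=\tfrac12\sum_i(v_i-m)-(v(a-a')-m)<0,
\]
because one of the $v_i-m$ is $0$ and the other two are below $v(a-a')-m$. Hence $X(P-P')=\Lambda^2+s_2+s_3-X(P)-X(P')$ has negative valuation, so $P-P'$ lies in the formal group and is trivial. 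The paper gets integrality and $\min_i v(x(P)-r_i)=0$ by passing to a minimal model over an extension with semistable reduction. Your $P$-dependent rescaling achieves both directly. So if you replace the non-singularity step with this slope estimate, your approach gives a complete and slightly more elementary proof.
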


\begin{proof}
We may pass to a finite extension $L/K$ containing $P,P'$ and
$r_1,r_2,r_3$, and over which $E$ has semistable reduction. Choose a
minimal Weierstrass equation over $L$. Since the residue characteristic is
odd, after completing the square we may write it as
$E_{\min}:Y^2=(X-\widetilde r_1)(X-\widetilde r_2)(X-\widetilde r_3)$. The
two equations are related by the change of variables $x=u^2X+q$, $y=u^3Y$
for some $u\in L^\times$ and $q\in L$. Write $P=(A,B)$,
$P'=(A',B')$ for the coordinates of the same points on $E_{\min}$. Then
$a-r_i=u^2(A-\widetilde r_i)$, $a'-r_i=u^2(A'-\widetilde r_i)$, and so
$A-\widetilde r_i\sim A'-\widetilde r_i$ for $i=1,2,3$. Similarly,
$B\sim B'$. It follows that we may assume that the given equation is minimal
and has semistable reduction.

The formal group of a minimal Weierstrass equation has no non-trivial
$\ell$-power torsion, since $\ell\ne p$
\cite[Proposition~VII.3.1(a)]{Silverman}. Thus $P$ and $P'$ have integral
coordinates. The roots $r_1,r_2,r_3$ are also integral.

If $a=a'$, then $b'=\pm b$, and $b\sim b'$ implies $b=b'$ since $p$
is odd. If $a=r_i$ for some $i$, then $a-r_i\sim a'-r_i$ forces
$a'=r_i$, and again $P=P'$. We may therefore assume that $a\ne a'$ and
$a\ne r_i$ for every $i$.

Put $w_i=v(a-r_i)$ for $i=1,2,3$, and put $h=v(a-a')$. The hypotheses
give $h>w_i$ for every $i$, and $w_i\ge0$. Since the equation for $E$
is minimal and semistable, there exists $j,k\in\{1,2,3\}$ such that
$v(r_j-r_k)=0$. Indeed, the reduction is either smooth or nodal, so the three
roots cannot all have the same reduction. Then
$0=v(r_j-r_k)\ge\min(w_j,w_k)$, and hence $\min_iw_i=0$. Let $\lambda$
be the slope of the line through $P$ and $-P'$. Since $b\sim b'$ and
$p$ is odd, $v(b+b')=v(b)=\tfrac12(w_1+w_2+w_3)$. Thus
$v(\lambda)=\tfrac12(w_1+w_2+w_3)-h<0$, as one of the $w_i$ is zero and
the other two are strictly less than $h$.

Since $x(P-P')=\lambda^2+r_1+r_2+r_3-a-a'$ and all terms other than
$\lambda^2$ are integral, we have $v(x(P-P'))=2v(\lambda)<0$. It follows
that $P-P'$ belongs to the formal group of the minimal equation. Since it is
an $\ell$-power torsion point and $\ell\ne p$, it must be trivial. Therefore
$P=P'$.
\end{proof}

\begin{lemma}\label{lem:xdifference}
Let $E_1:y^2=\prod_{r\in\mathcal R_1}(x-r)$ and
$E_2:y^2=\prod_{r'\in\mathcal R_2}(x-r')$ be elliptic curves over $K$,
such that $\phi:\mathcal R_1\xrightarrow{\sim}\mathcal R_2$ is a bijection
with $r'=\phi(r)$ for $r\in\mathcal R_1$. Suppose that
$P,Q\in E_1[\ell^n]$ and $P',Q'\in E_2[\ell^n]$ are non-trivial points
satisfying
\[
 x(P')-r'\sim x(P)-r,\quad y(P')\sim y(P),\quad
 x(Q')-r'\sim x(Q)-r,\quad y(Q')\sim y(Q)
\]
for every $r\in\mathcal R_1$. If $P\ne\pm Q$, then
$x(P')-x(Q')\sim x(P)-x(Q)$.
\end{lemma}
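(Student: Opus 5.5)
The plan is to reduce to a single root $r$ and to observe that the only obstruction is a simultaneous cancellation at every root. That situation can then be ruled out using Lemma~\ref{lem:unique}. For each $r\in\mathcal R_1$ write $A_r=x(P)-r$, $B_r=x(Q)-r$, $A'_r=x(P')-r'$ and $B'_r=x(Q')-r'$. Then
\[
 x(P)-x(Q)=A_r-B_r,\qquad x(P')-x(Q')=A'_r-B'_r
\]
for every $r$, and by hypothesis $A'_r\sim A_r$ and $B'_r\sim B_r$.

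The first step is an elementary observation about $\sim$. If $A\sim A'$, $B\sim B'$ and $A\not\sim B$, then $A-B\sim A'-B'$. The cases with a zero entry are immediate. Otherwise, if $v(A)\ne v(B)$, the term of smaller valuation dominates both differences. If $v(A)=v(B)$ but the leading residue coefficients differ, then $v(A-B)=v(A)$, and the leading coefficient of $A-B$ is the difference of the leading coefficients, which is the same for $A'-B'$. Hence, if $A_r\not\sim B_r$ for even one $r$, we are done.

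The second step handles the remaining case, where $A_r\sim B_r$ for all three $r\in\mathcal R_1$; the aim is to show it cannot occur. Multiplying over $r$ gives $y(P)^2\sim y(Q)^2$, using that $\sim$ is multiplicative and transitive on non-zero elements. Some care is needed if some $A_r=0$. In that case $B_r=0$ as well, so $x(P)=x(Q)=r$, hence $P=Q$ is the $2$-torsion point $(r,0)$, which contradicts $P\ne Q$. So we may assume all $A_r,B_r$ are non-zero. Since $p$ is odd, $y(P)^2\sim y(Q)^2$ gives $y(P)\sim\varepsilon y(Q)$ for some $\varepsilon\in\{\pm1\}$. This holds because the ratio $y(P)/y(Q)$ has square in $1+\fm$, and $1+\fm$ is $2$-divisible with $-1\notin 1+\fm$.

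Now set $\widetilde Q=Q$ if $\varepsilon=1$ and $\widetilde Q=-Q$ if $\varepsilon=-1$. Then $x(\widetilde Q)=x(Q)$, so $x(P)-r\sim x(\widetilde Q)-r$ for all $r$, and $y(P)\sim y(\widetilde Q)$. Both $P$ and $\widetilde Q$ are non-zero points of $E_1[\ell^n]$, so Lemma~\ref{lem:unique}, applied on $E_1$ alone, gives $P=\widetilde Q=\pm Q$. This contradicts the assumption $P\ne\pm Q$.

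I expect the only real subtlety to be the bookkeeping in the degenerate cases, namely $2$-torsion points and the sign choice for $y$. The key idea is simply that total cancellation at every root forces $P=\pm Q$, via Lemma~\ref{lem:unique}. Notably, the primed points and the hypothesis on $y(P'),y(Q')$ enter only through the first step.
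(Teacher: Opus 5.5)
Your proof is correct and follows the paper's argument: you choose a root $r$ with $x(P)-r\not\sim x(Q)-r$, which must exist because otherwise multiplying gives $y(P)^2\sim y(Q)^2$ and Lemma~\ref{lem:unique} forces $P=\pm Q$, and you then subtract the primed congruences at that root, where there is no cancellation. One small correction to your closing remark: the hypotheses $y(P')\sim y(P)$ and $y(Q')\sim y(Q)$ are never used, since your first step only needs the $x$-coordinate congruences, and the same is true of the paper's proof.
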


\begin{proof}
There is some $r\in\mathcal R_1$ such that $x(P)-r\not\sim x(Q)-r$.
Indeed, otherwise multiplying the three congruences gives
$y(P)^2\sim y(Q)^2$. If these coordinates are non-zero, then
$y(P)\sim\varepsilon y(Q)$ for some $\varepsilon\in\{\pm1\}$, and
Lemma~\ref{lem:unique} implies that $P=\varepsilon Q$, a contradiction. The
case where one of the $y$-coordinates is zero is immediate. For this choice
of $r$, there is no cancellation in the leading terms of
$(x(P)-r)-(x(Q)-r)$. Subtracting and using the corresponding primed
congruences gives $x(P')-x(Q')\sim x(P)-x(Q)$.
\end{proof}

\section{Proof of the main results}

\begin{proof}[Proof of Theorem~\ref{thm:main}]
Since $c_1/c_2\in K^{\times2}$, after replacing $E_2$ by a
$K$-isomorphic equation we may assume that $c_1=c_2=c$. The two curves
are then the same quadratic twists of $\widetilde E_i:y^2=f_i(x)$, so it is
enough to prove the result when $c=1$ (as
$T_2E_i\cong T_2\widetilde E_i\otimes\chi_c$, where $\chi_c$ is the
quadratic character corresponding to $K(\sqrt c)/K$).

Write $E_1:y^2=\prod_{r\in\mathcal R_1}(x-r)$ and
$E_2:y^2=\prod_{r\in\mathcal R_1}(x-r')$, where $r'=\phi(r)$. We
inductively construct maps $\varphi_n:E_1[2^n]\to E_2[2^n]$ such that:
\begin{enumerate}
\item $\varphi_n$ restricts to $\varphi_{n-1}$ on $E_1[2^{n-1}]$ and
$2\varphi_n(P)=\varphi_{n-1}(2P)$;
\item for every $P\in E_1[2^n]\setminus\{O\}$ and every
$r\in\mathcal R_1$, $x(\varphi_n(P))-r'\sim x(P)-r$ and
$y(\varphi_n(P))\sim y(P)$, where $\sim$ is defined in
Notation~\ref{not:sim};
\item $\varphi_n$ is a $G_K$-equivariant group isomorphism.
\end{enumerate}

Set $\varphi_0(O)=O$. For $n=1$, define $\varphi_1(O)=O$ and
$\varphi_1((r,0))=(r',0)$. This satisfies the required properties.

Assume that $\varphi_{n-1}$ has been constructed. We set
$\varphi_n(P)=\varphi_{n-1}(P)$ for $P\in E_1[2^{n-1}]$. Let
$P\in E_1[2^n]\setminus E_1[2^{n-1}]$, and let
$(t_r)_{r\in\mathcal R_1}$ be the triple corresponding to $P$ as a
half-point of $2P$. For each $r\in\mathcal R_1$, choose $t_r'$ as
follows. If $t_r=0$, set $t_r'=0$. Otherwise, let $t_r'$ be the unique
square root of $x(\varphi_{n-1}(2P))-r'$ such that
$t_r'/t_r\in1+\fm$. Such a choice exists and is unique because $p\ne2$
and
\[
 \frac{x(\varphi_{n-1}(2P))-r'}{t_r^2}\in1+\fm.
\]

The triple $(t_r')_{r\in\mathcal R_1}$ satisfies the required product
condition of Lemma~\ref{lem:halving}. Indeed, this is immediate if some
$t_r=0$. Otherwise,
$\varepsilon=-(\prod_{r\in\mathcal R_1}t_r')/y(\varphi_{n-1}(2P))$
satisfies $\varepsilon^2=1$ and $\varepsilon\in1+\fm$. Since $p\ne2$,
we have $\varepsilon=1$. We define $\varphi_n(P)$ to be the half-point of
$\varphi_{n-1}(2P)$ corresponding to $(t_r')_{r\in\mathcal R_1}$.

Let $r,s\in\mathcal R_1$ be distinct. We have $t_r'\sim t_r$,
$t_s'\sim t_s$, and
$(t_r')^2-(t_s')^2=s'-r'\sim s-r=t_r^2-t_s^2$. Therefore
Lemma~\ref{lem:sums} gives $t_r'+t_s'\sim t_r+t_s$ and
$t_r'-t_s'\sim t_r-t_s$. It follows from the coordinate formulae in
Lemma~\ref{lem:halving} that
\[
 x(\varphi_n(P))-r'=(t_r'+t_s')(t_r'+t_u')
 \sim(t_r+t_s)(t_r+t_u)=x(P)-r
\]
for every permutation $r,s,u$ of the three roots, and
\[
 y(\varphi_n(P))=-\prod_{\{r,s\}\subset\mathcal R_1}(t_r'+t_s')
 \sim-\prod_{\{r,s\}\subset\mathcal R_1}(t_r+t_s)=y(P).
\]
Thus property~(2) holds.

We next prove $G_K$-equivariance. Let $\sigma\in G_K$. If $t_r'$ is the
square root chosen above, then
$\sigma(t_r')^2=x(\varphi_{n-1}(2\sigma(P)))-\phi(\sigma(r))$ and
$\sigma(t_r')/\sigma(t_r)\in1+\fm$. Hence $\sigma(t_r')$ is precisely the
square root attached to $\phi(\sigma(r))$ in the construction of
$\varphi_n(\sigma(P))$. Applying $\sigma$ to the formulae of
Lemma~\ref{lem:halving} gives $\varphi_n(\sigma(P))=\sigma(\varphi_n(P))$.

It remains to prove that $\varphi_n$ is a group isomorphism. The sign-change
observations recorded in Lemma~\ref{lem:halving}, together with the inductive
hypothesis, give
\[
 \varphi_n(-P)=-\varphi_n(P),\qquad
 \varphi_n(P+T)=\varphi_n(P)+\varphi_1(T) \tag{$\dagger$}
\]
for every $P\in E_1[2^n]$ and $T\in E_1[2]$. These identities imply that
$\varphi_n$ is injective. Indeed, if $\varphi_n(P)=\varphi_n(Q)$, then the
inductive hypothesis gives $2P=2Q$, so $Q=P+T$ for some $T\in E_1[2]$.
Hence $\varphi_1(T)=O$, and therefore $T=O$. Since $E_1[2^n]$ and
$E_2[2^n]$ have the same order, $\varphi_n$ is bijective.

We now prove additivity. Let $P,Q\in E_1[2^n]$. Additivity is immediate from
the second identity in $(\dagger)$ if one of $P,Q$ belongs to $E_1[2]$.
The same identities also give additivity when $2P=\pm2Q$. For example, if
$2P=2Q$, then $Q=P+T$ for some $T\in E_1[2]$, and
\[
 \varphi_n(P)+\varphi_n(Q)=2\varphi_n(P)+\varphi_1(T)
 =\varphi_{n-1}(2P)+\varphi_1(T)=\varphi_n(2P+T)=\varphi_n(P+Q).
\]
The case $2P=-2Q$ is similar, using $\varphi_n(-P)=-\varphi_n(P)$.

We may therefore assume that $2P\ne\pm2Q$. Write
$\mathcal R_1=\{r_1,r_2,r_3\}$ and put $r_i'=\phi(r_i)$. Let $(a_i)$,
$(b_i)$, and $(c_i)$ be the triples corresponding to $P$, $Q$, and
$P+Q$ as half-points of $2P$, $2Q$, and $2P+2Q$, respectively. Let
$(a_i')$, $(b_i')$, and $(c_i')$ be the corresponding triples defining
$\varphi_n(P)$, $\varphi_n(Q)$, and $\varphi_n(P+Q)$. By construction,
$a_i'\sim a_i$, $b_i'\sim b_i$, and $c_i'\sim c_i$ for each $i$.
Finally, let $(d_i')$ be the triple corresponding to
$\varphi_n(P)+\varphi_n(Q)$ as a half-point of
$\varphi_{n-1}(2P)+\varphi_{n-1}(2Q)=\varphi_{n-1}(2P+2Q)$. By
Lemma~\ref{lem:xdifference},
\[
 (a_i')^2-(b_i')^2=x(\varphi_{n-1}(2P))-x(\varphi_{n-1}(2Q))
 \sim x(2P)-x(2Q)=a_i^2-b_i^2.
\]
As $a_i'\sim a_i$ and $b_i'\sim b_i$, it follows from
Lemma~\ref{lem:sums} that $a_i'\pm b_i'\sim a_i\pm b_i$. For distinct
$i,j$,
\[
 (a_i b_j-b_i a_j)(a_i b_j+b_i a_j)=(r_i-r_j)(x(2P)-x(2Q)),
\]
and the analogous identity holds for the primed terms. Using
$r_i'-r_j'\sim r_i-r_j$ and Lemma~\ref{lem:sums}, it follows that
$a_i'b_j'\pm b_i'a_j'\sim a_i b_j\pm b_i a_j$.

Let $i,j,k$ be a permutation of $1,2,3$. By Lemma~\ref{lem:addition},
\[
 c_i+c_j=\frac{a_i b_j+b_i a_j}{a_k+b_k},\qquad
 d_i'+d_j'=\frac{a_i'b_j'+b_i'a_j'}{a_k'+b_k'}.
\]
Therefore $d_i'+d_j'\sim c_i+c_j$. On the other hand,
$c_i'\sim c_i$, $c_j'\sim c_j$, and
$(c_i')^2-(c_j')^2=r_j'-r_i'\sim r_j-r_i=c_i^2-c_j^2$, so
Lemma~\ref{lem:sums} gives $c_i'+c_j'\sim c_i+c_j$. Hence
$c_i'+c_j'\sim d_i'+d_j'$ for all distinct $i,j$.

Set $A=\varphi_n(P+Q)$ and $B=\varphi_n(P)+\varphi_n(Q)$. It follows from
the formulae in Lemma~\ref{lem:halving} that
$x(A)-r_i'\sim x(B)-r_i'$ for $i=1,2,3$, and $y(A)\sim y(B)$.
Lemma~\ref{lem:unique} therefore implies that $A=B$. Thus $\varphi_n$ is
additive, and hence is a $G_K$-equivariant group isomorphism. This completes
the induction.

The maps $\varphi_n$ are compatible with multiplication by $2$. Passing to
the inverse limit therefore yields an isomorphism $T_2E_1\cong T_2E_2$ of
$\mathbb Z_2[G_K]$-modules.
\end{proof}

\begin{proof}[Proof of Theorem~\ref{thm:different}]
Since the roots of $f$ lie in $K^{\mathrm t}$, wild inertia acts trivially on
$E[2]$. Its image on $T_2E$ therefore lies in the pro-$2$ group
$\ker(\operatorname{GL}_2(\mathbb Z_2)\to\operatorname{GL}_2(\mathbb F_2))$.
As wild inertia is pro-$p$ and $p\ne2$, it acts trivially on $T_2E$. Thus
the action factors through $G_K^{\mathrm t}$, and similarly for $E'$.

The relation $\sim_{K,K'}$ has the same product, quotient, and square-root
properties as $\sim$, so the arguments of Lemmas~\ref{lem:sums} and
\ref{lem:xdifference} apply unchanged. Also, if two elements of $(K')^{\mathrm t}$ are
$\sim_{K,K'}$-equivalent to the same element of $K^{\mathrm t}$, then
their quotient lies in $1+\mathfrak m_{(K')^{\mathrm t}}$. Thus
Lemma~\ref{lem:unique} may be applied on $E'$.

The condition $c_f\sim_{\mathrm{sq}}c_{f'}$ allows us to twist separately
and reduce to the case where $f$ and $f'$ are monic. The proof of
Theorem~\ref{thm:main} then applies with $\sim$ replaced by
$\sim_{K,K'}$, giving compatible $\Psi$-equivariant isomorphisms
$E[2^n]\cong E'[2^n]$ for every $n$, and hence
$T_2E\cong\Psi^*T_2E'$.
\end{proof}

\begin{proof}[Proof of Corollary~\ref{cor:explicit}]
Write $\mathcal R_1=\{r_1,r_2,r_3\}$. Fix $r_1$, and assume that
$a=v(r_1-r_2)\le b=v(r_1-r_3)$. Then $b\le d$ and
$a+b=v(f_1'(r_1))\le e$, so $N>a+2b$. Since
$f_1\equiv f_2\pmod{\pi^N\cO_K[x]}$ and $r_1$ is integral, we have
$v(f_2(r_1))\ge N$ and $v(f_2'(r_1)-f_1'(r_1))\ge N$. Hence
$v(f_2'(r_1))=a+b$. Moreover,
$\tfrac12f_1''(r_1)=(r_1-r_2)+(r_1-r_3)$, so
$v(\tfrac12f_2''(r_1))\ge a$.

Now
\[
 f_2(r_1+x)=f_2(r_1)+f_2'(r_1)x+\tfrac12f_2''(r_1)x^2+x^3.
\]
If $f_2(r_1)\ne0$, put $c=v(f_2(r_1))$. Since $c\ge N>a+2b$, the
Newton polygon of $f_2(r_1+x)$ has a unique segment of slope less than
$-b$, namely the segment from $(0,c)$ to $(1,a+b)$. Hence $f_2$ has
a unique root $\phi(r_1)$ satisfying
$v(\phi(r_1)-r_1)=c-a-b>b$. If $f_2(r_1)=0$, take
$\phi(r_1)=r_1$; the same coefficient estimates show that this is again
the unique root satisfying $v(\phi(r_1)-r_1)>b$.

Applying the same argument to each $r_i$ gives a $G_K$-equivariant
bijection $\phi:\mathcal R_1\to\mathcal R_2$ such that
$v(\phi(r_i)-r_i)>v(r_i-r_j)$ for $i\ne j$. Finally, for $i\ne j$,
\[
 v\bigl((\phi(r_i)-\phi(r_j))-(r_i-r_j)\bigr)>v(r_i-r_j),
\]
and therefore
$(\phi(r_i)-\phi(r_j))/(r_i-r_j)\in1+\fm$. The result follows from
Theorem~\ref{thm:main}.
\end{proof}

\section*{Acknowledgements}

I thank Vladimir Dokchitser for suggesting this problem, and in particular for
directing me to Zarhin's work on division by $2$ on hyperelliptic curves. This
work was supported by the Engineering and Physical Sciences Research Council
[EP/S021590/1], the EPSRC Centre for Doctoral Training in Geometry and Number
Theory (The London School of Geometry and Number Theory) at University College
London.

\end{document}